%

\documentclass[12pt]{amsart}
\usepackage{amscd, amsfonts, amssymb, amsthm, mathrsfs}
\usepackage{parskip, fullpage, verbatim}
\usepackage[all]{xypic}
\usepackage{ctable}
\usepackage{longtable}
\usepackage{array}
\usepackage{aliascnt}
\usepackage[colorlinks, breaklinks, linkcolor=blue]{hyperref} 
\usepackage{breakurl}
\usepackage{booktabs}
\usepackage{wasysym}


\newcommand\CA{{\mathscr A}} 
\newcommand\CB{{\mathscr B}}
\newcommand\CC{{\mathscr C}} 
\renewcommand\CD{{\mathscr D}}
\newcommand\CE{{\mathscr E}}

\newcommand\CIF{{\mathcal {IF}}}

\newcommand\CDF{{\mathcal {DF}}} 
 
\newcommand\CSF{{\mathcal {SF}}} 
\newcommand\CAF{{\mathcal {AF}}}

\newcommand\BBK{{\mathbb K}}
\newcommand\BBQ{{\mathbb Q}}

\newcommand\BBZ{{\mathbb Z}}


\newcommand\Der{\operatorname{Der}}
\newcommand\pdeg{\operatorname{pdeg}}




\numberwithin{equation}{section}

\theoremstyle{plain}

\newtheorem{theorem}[equation]{Theorem}
\newtheorem{conjecture}[equation]{Conjecture}

\newtheorem{proposition}[equation]{Proposition}
\theoremstyle{definition}
\newtheorem{definition}[equation]{Definition}
\newtheorem{remark}[equation]{Remark}
\newtheorem{example}[equation]{Example}


\begin{document}

\subjclass[2010]{20F55, 52C35, 14N20, 32S22, 51D20, 51F15}

\title[Some remarks on free arrangements]{Some remarks on free arrangements}


\author[T. Hoge]{Torsten Hoge}
\address
{Institut f\"ur Algebra, Zahlentheorie und Diskrete Mathematik,
Fakult\"at f\"ur Mathematik und Physik,
Leibniz Universit\"at Hannover,
Welfengarten 1,
30167 Hannover, Germany}

\author[G. R\"ohrle]{Gerhard R\"ohrle}
\address
{Fakult\"at f\"ur Mathematik,
Ruhr-Universit\"at Bochum,
D-44780 Bochum, Germany}
\email{gerhard.roehrle@rub.de}

\keywords{Hyperplane arrangements, 
Terao's Conjecture, inductively free arrangements,  
divisionally free arrangements, additionally free arrangements,
stair-free arrangements}

\begin{abstract}
We exhibit a particular free subarrangement of 
a certain restriction of the Weyl arrangement of type $E_7$ 
and use it to give an affirmative answer to a recent
conjecture by  T.~Abe on the nature of additionally free and stair-free arrangements.
\end{abstract}

\maketitle
\allowdisplaybreaks


\section{Introduction}
The interplay between algebraic and combinatorial structures of 
hyperplane arrangements has been a driving force in the study of the subject 
for a long time. At the very heart of these investigations lies 
Terao's Conjecture \ref{conj:terao} which asserts that the 
algebraic property of freeness of an arrangement 
is determined by purely combinatorial data.

\begin{conjecture}
[{\cite[Conj.~4.138]{orlikterao:arrangements}}]
\label{conj:terao}
For a fixed field, 
freeness of the arrangement $\CA$ only depends on its 
lattice  $L(\CA)$, i.e.\ is combinatorial.
\end{conjecture}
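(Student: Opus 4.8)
The plan is to reduce Conjecture \ref{conj:terao} to the existence of a purely lattice-theoretic criterion for freeness: if one can produce a combinatorial predicate $P$ on finite geometric lattices such that, over the fixed field, $\CA$ is free if and only if $P(L(\CA))$ holds, then the conjecture follows at once, since $L(\CA)\cong L(\CB)$ forces $P(L(\CA))=P(L(\CB))$. Thus the entire content is to isolate such a $P$ and prove both implications.

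First I would record the one direction that is already combinatorial. By Terao's Factorization Theorem, if $\CA$ is free with exponents $(d_1,\dots,d_\ell)$ then its characteristic polynomial satisfies $\chi(\CA,t)=\prod_{i=1}^{\ell}(t-d_i)$; since $\chi(\CA,t)$ is determined by $L(\CA)$ via the M\"obius function, freeness imposes the combinatorial necessary condition that $\chi(\CA,t)$ split into nonnegative integer roots. So for the ``only if'' direction it suffices to let $P$ entail this factorization, together with whatever finer lattice data the refinements below supply.

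For the reverse implication I would proceed inductively using Terao's Addition--Deletion Theorem together with its modern strengthenings --- Abe's Division Theorem and the Multiple Addition Theorem --- which relate freeness of $\CA$ to that of a deletion $\CA'$ and the restriction $\CA''$ while invoking only the characteristic polynomials $\chi(\CA',t)$ and $\chi(\CA'',t)$. The idea is to build an arbitrary free $\CA$ from the empty arrangement by a sequence of single-hyperplane additions, at each stage certifying freeness from the combinatorially controlled triple $\bigl(\chi(\CA,t),\chi(\CA',t),\chi(\CA'',t)\bigr)$, and then to transport this certificate across any lattice isomorphism $L(\CA)\cong L(\CB)$ so that $\CB$ inherits the same inductive construction, and hence freeness. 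This is exactly the strategy underlying the inductively free, divisionally free, additionally free, and stair-free classes whose relationships the remainder of this paper examines.

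The hard part --- and the reason Conjecture \ref{conj:terao} remains open rather than a theorem --- is that no such combinatorial predicate $P$ is known, and the inductive certificates above are provably only sufficient, not necessary: the converse of the Factorization Theorem fails (Edelman--Reiner), and there exist free arrangements lying outside every currently known combinatorially defined subclass. At bottom the derivation module $D(\CA)$ depends on the actual defining linear forms, not merely on the matroid they realise, so two arrangements with $L(\CA)\cong L(\CB)$ could a priori carry non-isomorphic derivation modules; ruling this out over a fixed field is precisely the unresolved core. Consequently I would not expect this route to settle the conjecture in general, and the present paper accordingly does not establish Conjecture \ref{conj:terao} but instead supplies new structural evidence for it by resolving Abe's conjecture on the additionally free and stair-free classes.
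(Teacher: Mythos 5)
The item you were asked to prove is Conjecture \ref{conj:terao} --- Terao's Conjecture --- which the paper does not prove and which is a long-standing open problem; the paper only cites it and proves results \emph{about} subclasses of free arrangements in which it is known to hold. So there is no proof in the paper to compare yours against, and your proposal, by its own admission in the final paragraph, is not a proof either. The unfillable gap is exactly where you place it: the combinatorial predicate $P$ is hypothesized but never constructed, and constructing it is the entire content of the conjecture. Your reduction (``if $P$ exists then freeness is combinatorial'') is a tautology, and your ``only if'' direction via Terao's Factorization Theorem \ref{thm:factorization} yields only a necessary condition that is known not to be sufficient --- indeed this paper's own Example \ref{ex:divnotfree} exhibits a non-free triple all of whose characteristic polynomials split over $\BBZ$. (A small attribution slip: the Edelman--Reiner counterexample concerns Orlik's conjecture that restrictions of free arrangements are free, not the converse of the Factorization Theorem, though the latter also fails.)

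Your second step --- certifying freeness inductively through Theorems \ref{thm:add-del} and \ref{thm:div-free} and transporting the certificate across a lattice isomorphism $L(\CA) \cong L(\CB)$ --- would, if it worked for every free arrangement, prove the conjecture; but it provably does not reach every free arrangement. The certificates you describe define precisely the classes $\CIF$, $\CDF$, $\CAF$ and $\CSF$, within which Terao's Conjecture is indeed valid (Theorem \ref{thm:sf-free} and the results of Abe cited there), and the paper's main contribution is to show these classes are genuinely nested and proper: for instance the arrangement $\CD$ of \S\ref{sec:C} is free and even additionally free, yet its restriction $\CD''$ is free while admitting \emph{no} addition-deletion or division certificate at all --- every deletion fails to have matching exponents and no restriction's characteristic polynomial divides $\chi(\CD'',t)$. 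Such arrangements show that freeness can hold with no inductive witness of the kind your argument requires, so the induction cannot close. In short: the statement is a conjecture, your proposal correctly identifies why it cannot currently be proved, but as a proof it fails at the step ``build an arbitrary free $\CA$ from the empty arrangement by certified additions,'' which is false as stated.
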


In his recent papers \cite{abe:deletion} and \cite{abe:sf}, T.~Abe shows that
all free arrangements that obey Terao's Addition-Deletion Theorem \ref{thm:add-del}
are indeed combinatorial.
In \cite{abe:sf}, he introduced 
a new class of free arrangements, so called
\emph{stair-free} arrangements $\CSF$
(Definition \ref{def:sf}). 
Its significance lies in the fact that 
Terao's Conjecture \ref{conj:terao} is still valid within 
$\CSF$ (\cite[Thm.~4.3]{abe:sf}). To date this is the largest known class of free arrangements 
with this property. 
This class  encompasses the 
class of \emph{divisionally free} arrangements $\CDF$
(Definition \ref{def:divfree})
and the class of \emph{additionally free} arrangements $\CAF$
(Definition \ref{def:addfree}), \cite[Thm.~4.3]{abe:sf}.

The class of divisionally free arrangements $\CDF$ in turn contains the class of 
inductively free arrangements $\CIF$ (Definition \ref{def:indfree}),
cf.~\cite[Thm.~1.6]{abe:divfree}.
The following confirms a conjecture of Abe, 
\cite[Conj.\ 4.4]{abe:sf}, which resolves the containment 
relations among these classes of free arrangements.

\begin{theorem}
\label{thm:abe-conj4.4}
With the notation from above, we have 
\begin{itemize}
\item[(i)] $\CIF \subsetneq \CAF$;
\item[(ii)] $\CDF \not\supset \CAF$;
\item[(iii)] $\CDF \cup \CAF \subsetneq \CSF$.
\end{itemize}
\end{theorem}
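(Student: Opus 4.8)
The plan is to reduce Theorem~\ref{thm:abe-conj4.4} to a small number of explicit membership assertions about arrangements arising from the Weyl arrangement of type $E_7$, and then to verify those assertions by a mixture of structural arguments and finite, computer-assisted checks. Recall that the inclusions $\CIF \subseteq \CDF \subseteq \CSF$ and $\CAF \subseteq \CSF$, together with $\CDF \cup \CAF \subseteq \CSF$ from \cite[Thm.~4.3]{abe:sf}, are already in place. Consequently it suffices to produce: (a) a proof that $\CIF \subseteq \CAF$; (b) an arrangement $\CA$ with $\CA \in \CAF \setminus \CDF$; and (c) an arrangement $\CB$ with $\CB \in \CSF \setminus (\CDF \cup \CAF)$. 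Indeed, once $\CIF \subseteq \CAF$ is known, (b) yields (ii) at once (as $\CA \in \CAF$ but $\CA \notin \CDF$), and since $\CIF \subseteq \CDF$ the same $\CA$ lies in $\CAF \setminus \CIF$, giving the strict inclusion in (i); finally (c) gives the strict inclusion in (iii).

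The containment $\CIF \subseteq \CAF$ I would settle directly from the definitions. An inductively free arrangement comes with a chain $\emptyset = \CA_0 \subset \CA_1 \subset \cdots \subset \CA_n = \CA$ in which, at each step $\CA_i = \CA_{i-1} \cup \{H_i\}$, both $\CA_{i-1}$ and the restriction $\CA_i^{H_i}$ are inductively free with compatible exponents. Since an inductively free restriction is in particular free, exactly the same chain certifies additional freeness, where only $\CA_{i-1} \in \CAF$ and the freeness of $\CA_i^{H_i}$ are required; an easy induction on $n$ then gives $\CA \in \CAF$, whence $\CIF \subseteq \CAF$.

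For (b) I would take a suitable restriction $\CR$ of the type-$E_7$ Weyl arrangement to a flat $X$ and single out a free subarrangement $\CA \subseteq \CR$. Membership $\CA \in \CAF$ is verified by exhibiting an explicit addition chain: one orders the hyperplanes so that at each stage the restriction to the newly added hyperplane is free and its exponents form a sub-multiset of the exponents of the arrangement built so far, so that the addition part of Theorem~\ref{thm:add-del} applies at every step. The non-membership $\CA \notin \CDF$ is the more delicate half: here I would compute the characteristic polynomials $\chi(\CA^{Y};t)$ of all restrictions $\CA^{Y}$, $Y \in L(\CA)$, and show that no complete flag $V = Y_0 \supset Y_1 \supset \cdots \supset Y_{\ell} = \{0\}$ in $L(\CA)$ satisfies the consecutive divisibilities $\chi(\CA^{Y_{k+1}};t) \mid \chi(\CA^{Y_{k}};t)$ required for a divisional flag. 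As this is a finite search over the intersection lattice, it can be carried out with computer algebra.

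Finally, for (c) I would produce a second arrangement $\CB$ from the same $E_7$ data and establish stair-freeness positively, by exhibiting the staircase of flats demanded by Definition~\ref{def:sf}, while ruling out both $\CB \in \CDF$ (again by excluding every divisional flag, as above) and $\CB \in \CAF$. The last exclusion is what I expect to be the main obstacle: additional freeness is very permissive, since the restriction to the hyperplane removed at each step need only be free, so to certify $\CB \notin \CAF$ one must show that for every $H \in \CB$ at least one of the Addition-Theorem hypotheses fails or else the deletion $\CB \setminus \{H\}$ is already not additionally free, and then recurse. This recursion terminates because each step shrinks the arrangement, but the search tree is broad; the real work is to locate $\CB$ so that this exclusion holds simultaneously with the failure of divisional freeness and with stair-freeness. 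The combinatorics forcing all three conditions at once, rather than the individual verifications, is the crux of the argument.
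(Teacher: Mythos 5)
Your reduction and your treatment of parts (i) and (ii) follow essentially the paper's route: the paper exhibits a rank~$5$ arrangement $\CD$ (a one-hyperplane deletion of an inductively free subarrangement $\CC$ of the restriction of $\CA(E_7)$ of type $(E_7,A_1^2)$) which is additionally free via an explicit chain but neither inductively free nor divisionally free, the negative statements being finite, computer-assisted checks in the intersection lattice, just as you propose. One caution about your non-membership test for $\CDF$: Definition \ref{def:divfree} only requires a flag down to rank $\ell-2$, not a complete flag down to $\{0\}$; ruling out complete flags with divisibility at every step is a \emph{stronger} requirement, so a search organized as you describe could wrongly declare an arrangement to be outside $\CDF$. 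The search must be over flags of length $\ell-2$.

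The genuine gap is in part (iii). You propose to locate a single arrangement $\CB$ built from the same $E_7$ data which is stair-free but neither divisionally nor additionally free, and you yourself identify the verification $\CB \notin \CAF$ as the crux --- but you give no candidate and no feasible way to complete the broad recursive exclusion you describe. This is not merely a computation left open: the paper explicitly remarks that no \emph{irreducible} example in $\CSF\setminus(\CDF\cup\CAF)$ is known, so your plan as stated aims at something that may be out of reach. The paper sidesteps the difficulty with a product construction. It first proves (Proposition \ref{prop:product-sffree}) that a product $\CA_1\times\CA_2$ is additionally free (resp.\ stair-free) if and only if both factors are, and cites the analogous product statement for $\CDF$. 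It then sets $\CE := \CD \times \CA(G_{31})$, where $\CD \in \CAF\setminus\CDF$ is the example from parts (i) and (ii), and $\CA(G_{31}) \in \CDF\setminus\CAF$ is known from the literature (every deletion of $\CA(G_{31})$ is non-free, so no free filtration can exist, which is a far simpler certificate of non-membership in $\CAF$ than the recursion you envisage). The product $\CE$ is stair-free because both factors are; it is not in $\CDF$ because $\CD$ is not, and not in $\CAF$ because $\CA(G_{31})$ is not. To repair your proposal you would either need this product argument, including a proof of the product compatibility of $\CAF$ and $\CSF$, or a genuinely new irreducible example together with a completed non-$\CAF$ certificate.
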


In \S \ref{sec:proofs} we exhibit a subarrangement $\CD$ of the 
rank $5$ restriction of type $(E_7,A_1^2)$ of the Weyl arrangement of type $E_7$
which is additionally free but not inductively free and which at the same time is not 
divisionally free; so   
parts (i) and (ii) of Theorem \ref{thm:abe-conj4.4} follow.
It turns out that $\CD$ is the restriction of a subarrangement 
$\CB$ of the Weyl arrangement of type $E_7$ which also shares 
these features. 
These two arrangements are the only instances known to us with these properties.
Each of $\CB$ and $\CD$ is obtained from an 
inductively free arrangement by deleting a single hyperplane.
Moreover, $\CD$ is also crucially involved in our construction of an example 
in $\CSF\setminus (\CDF \cup \CAF)$ which gives part (iii) of 
Theorem \ref{thm:abe-conj4.4}.
It is quite remarkable that a particular subarrangement of
a restriction of the Weyl arrangement of type $E_7$ provides the basis for all the statements in 
Theorem \ref{thm:abe-conj4.4}.
  
Free arrangements are compatible with 
the product construction for arrangements, \cite[Prop.~4.28]{orlikterao:arrangements}.
It is easy to show that this is also the case for Abe's new classes 
$\CAF$ and $\CSF$, see Proposition \ref{prop:product-sffree}.

In addition, we show that $\CAF$ is not closed 
under taking restrictions, see \S \ref{sec:C}.
In turn $\CDF$ is not closed  
under taking localizations, see \cite[Ex.~2.16]{roehrle:divfree}.
Consequently, the larger class $\CSF$ is not closed under these operations either.

In our final section we address another conjecture of Abe, \cite[Conj.\ 3.5(2)]{abe:sf},
which states that if the characteristic polynomials $\chi( \CA, t)$ and $\chi( \CA', t)$
of $\CA$ and a deletion $\CA'$ of $\CA$
factor over $\BBZ$ and share all but one root, then both $\CA$ and $\CA'$ are free.
While this is true in dimension at most $3$, thanks to \cite[Thm.~1.1]{abe:divfree}, 
in Example \ref{ex:divnotfree}, 
we give a counterexample to this conjecture in dimension $4$.
Specifically, we present a triple of arrangements $(\CA, \CA', \CA'')$ with the property that 
none of its members is free but each of their characteristic polynomials factors over $\BBZ$ and  
$\chi(\CA'', t)$ divides both $\chi(\CA', t)$ and $\chi( \CA, t)$.
We end with a general construction for examples of this kind in  Example \ref{ex:divnotfree2}. 

\section{Preliminaries}
\label{ssect:recoll}

\subsection{Hyperplane Arrangements}
\label{ssect:hyper}

Let $\BBK$ be a field and let $V = \BBK^\ell$.
By a hyperplane arrangement in $V$ we mean a finite set $\CA$ of
hyperplanes in $V$. Such an arrangement is denoted $(\CA,V)$ 
or simply $\CA$. If $\dim V = \ell$
we call $\CA$ an $\ell$-arrangement.
The number of elements in $\CA$ is given by $|\CA|$.
The empty $\ell$-arrangement 
is denoted by $\Phi_\ell$.

By $L(\CA)$ we denote the set of all nonempty intersections of elements of $\CA$,
\cite[Def.~1.12]{orlikterao:arrangements}.
For $X \in L(\CA)$, we have two associated arrangements, 
firstly the subarrangement 
$\CA_X :=\{H \in \CA \mid X \subseteq H\} \subseteq \CA$
of $\CA$ and secondly, 
the \emph{restriction of $\CA$ to $X$}, $(\CA^X,X)$, where 
$\CA^X := \{ X \cap H \mid H \in \CA \setminus \CA_X\}$,
\cite[Def.~1.13]{orlikterao:arrangements}.
Note that $V$ belongs to $L(\CA)$
as the intersection of the empty 
collection of hyperplanes and $\CA^V = \CA$. 

If $0 \in H$ for each $H$ in $\CA$, then 
$\CA$ is called \emph{central}.
We only consider central arrangements.

Let $H \in \CA$ (for $\CA \neq \Phi_\ell$) and
define $\CA' := \CA \setminus\{ H\}$,
and $\CA'' := \CA^{H}$.
Then $(\CA, \CA', \CA'')$ is a \emph{triple} of arrangements, 
\cite[Def.~1.14]{orlikterao:arrangements}.

The \emph{product}
$\CA = (\CA_1 \times \CA_2, V_1 \oplus V_2)$ 
of two arrangements $(\CA_1, V_1), (\CA_2, V_2)$
is defined by
\begin{equation*}
\label{eq:product}
\CA := \CA_1 \times \CA_2 = \{H_1 \oplus V_2 \mid H_1 \in \CA_1\} \cup 
\{V_1 \oplus H_2 \mid H_2 \in \CA_2\},
\end{equation*}
see \cite[Def.~2.13]{orlikterao:arrangements}.
In particular, $|\CA| = |\CA_1| + |\CA_2|$. 
For $H =  H_1 \oplus V_2 \in \CA$, we have 
\begin{equation}
\label{eq:restrproduct}
\CA^H = \CA_1^{H_1} \times \CA_2.
\end{equation}

The \emph{characteristic polynomial} 
$\chi(\CA,t) \in \BBZ[t]$ 
of $\CA$ is defined by 
\[
\chi(\CA,t) := \sum_{X \in L(\CA)} \mu(X)t^{\dim X},
\]
where $\mu$ is the M\"obius function of $L(\CA)$, 
see \cite[Def.\ 2.52]{orlikterao:arrangements}.

If $\CA = \CA_1 \times \CA_2$ is a product, then, 
thanks to \cite[Lem.\ 2.50]{orlikterao:arrangements}, 
\begin{equation}
\label{eq:chiproduct}
\chi(\CA,t) = \chi(\CA_1,t) \cdot  \chi(\CA_2,t). 
\end{equation}

\subsection{Free Arrangements}
\label{ssect:free}
Let $S = S(V^*)$ be the symmetric algebra of the dual space $V^*$ of $V$.
If $\CA$ is an arrangement in $V$,
then for every $H \in \CA$ we may fix $\alpha_H \in V^*$ with
$H = \ker(\alpha_H)$.
We call $Q(\CA) := \prod_{H \in \CA} \alpha_H \in S$
the \emph{defining polynomial} of $\CA$.

The \emph{module of $\CA$-derivations} is the $S$-submodule of $\Der(S)$,
the $S$-module of $\BBK$-derivations of $S$, 
defined by 
\[
D(\CA) := \{\theta \in \Der(S) \mid \theta(Q(\CA)) \in Q(\CA) S\}.
\]
The arrangement $\CA$ is said to be \emph{free} 
if $D(\CA)$ is a free $S$-module.

If $\CA$ is a free $\ell$-arrangement, 
then $D(\CA)$ admits an $S$-basis of $\ell$ 
homogeneous derivations $\theta_1, \ldots, \theta_\ell$, 
by \cite[Prop.~4.18]{orlikterao:arrangements}.
While such a homogeneous $S$-basis of $D(\CA)$ need not be unique, 
the multiset
consisting of the polynomial degrees of the $\theta_i$
is unique. They are called the \emph{exponents} of 
the free arrangement $\CA$ and are denoted by
$\exp\CA := \{\pdeg \theta_1, \ldots, \pdeg \theta_\ell\}$.

Terao's basic \emph{Addition-Deletion Theorem} 
plays a key role in the study of free arrangements.

\begin{theorem}
[{\cite{terao:freeI}, \cite[Thm.\ 4.51]{orlikterao:arrangements}}]
\label{thm:add-del}
Suppose $\CA \neq \Phi_\ell$ and
let $(\CA, \CA', \CA'')$ be a triple of arrangements. Then any 
two of the following statements imply the third:
\begin{itemize}
\item[(i)] $\CA$ is free with $\exp\CA = \{ b_1, \ldots , b_{\ell -1}, b_\ell\}$;
\item[(ii)] $\CA'$ is free with $\exp\CA' = \{ b_1, \ldots , b_{\ell -1}, b_\ell-1\}$;
\item[(iii)] $\CA''$ is free with $\exp\CA'' = \{ b_1, \ldots , b_{\ell -1}\}$.
\end{itemize}
\end{theorem}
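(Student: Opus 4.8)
The plan is to forget the geometry and work entirely inside the symmetric algebra $S$, comparing the three modules $D(\CA)$, $D(\CA')$ and $D(\CA'')$ through one exact sequence. Write $\alpha=\alpha_H$ for the defining form of the deleted hyperplane and $\oS=S/\alpha S$ for the coordinate ring of $H$, so that $\CA''$ lives on $\oS$ and $Q(\CA)=\alpha\, Q(\CA')$. First I would record an inclusion and a restriction map. Expanding $\theta(Q(\CA))=\theta(\alpha)\,Q(\CA')+\alpha\,\theta(Q(\CA'))$ shows that every $\theta\in D(\CA)$ automatically lies in $D(\CA')$ and, since $\alpha\nmid Q(\CA')$, that $\theta(\alpha)\in\alpha S$. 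Hence such a $\theta$ preserves the ideal $\alpha S$ and descends to a derivation $\bar\theta$ of $\oS$. Reducing the relation $\theta(Q(\CA'))\in Q(\CA')S$ modulo $\alpha$ and using that the image of $Q(\CA')$ in $\oS$ has the same radical as $Q(\CA'')$, one checks $\bar\theta\in D(\CA'')$. This yields a graded $S$-linear restriction map $\rho\colon D(\CA)\to D(\CA'')$, $\theta\mapsto\bar\theta$.

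The structural heart of the argument is then the left exact sequence
\[
0 \longrightarrow D(\CA')(-1) \xrightarrow{\ \cdot\,\alpha\ } D(\CA) \xrightarrow{\ \rho\ } D(\CA''),
\]
in which the first arrow is multiplication by $\alpha$ (degree preserving after the shift). Injectivity is immediate since $S$ is a domain, and the real content is the identity $\ker\rho=\alpha\,D(\CA')$: if $\bar\theta=0$ then every coefficient of $\theta$ is divisible by $\alpha$, so $\theta=\alpha\eta$, and a short computation converts $\theta\in D(\CA)$ into $\eta\in D(\CA')$ and back. Note that the sequence need not be exact on the right; surjectivity of $\rho$ is exactly what the freeness hypotheses must supply, and it is the crux of the whole theorem.

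To pass from the sequence to freeness I would invoke Saito's criterion (\cite[Thm.~4.19]{orlikterao:arrangements}): homogeneous $\theta_1,\dots,\theta_\ell\in D(\CA)$ form an $S$-basis precisely when $\det\bigl(\theta_i(x_j)\bigr)$ is a nonzero scalar multiple of $Q(\CA)$; since $Q(\CA)$ always divides this determinant when the $\theta_i$ lie in $D(\CA)$, and the exponents of a free arrangement sum to its number of hyperplanes, it suffices to produce $\ell$ derivations of the prescribed degrees and check that the determinant does not vanish. For the Addition step (ii)\,$+$\,(iii)\,$\Rightarrow$\,(i) I would lift a homogeneous $\oS$-basis $\psi_1,\dots,\psi_{\ell-1}$ of $D(\CA'')$ (degrees $b_1,\dots,b_{\ell-1}$) through $\rho$ to $\theta_1,\dots,\theta_{\ell-1}\in D(\CA)$ of the same degrees, and adjoin $\theta_\ell:=\alpha\,\eta$, where $\eta$ is the degree $b_\ell-1$ member of a basis of $D(\CA')$ drawn from $\ker\rho$. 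The degrees then total $|\CA|$, and applying $\rho$ to any $S$-relation among the $\theta_i$ and using $\oS$-independence of the $\psi_i$ lets one peel off a factor of $\alpha$ and descend, forcing $S$-independence and hence a nonzero Saito determinant. The remaining implications (i)\,$+$\,(iii)\,$\Rightarrow$\,(ii) and (i)\,$+$\,(ii)\,$\Rightarrow$\,(iii) run the same sequence in the other two directions, pushing a basis of $D(\CA)$ forward by $\rho$ or dividing it by $\alpha$ inside $\ker\rho$, again closing with one degree count and one determinant check.

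The main obstacle, in every case, is the surjectivity of $\rho$ under the hypotheses, i.e.\ promoting the left exact sequence to a short exact one, which is where the matching of exponents is indispensable: one must lift an entire basis of $D(\CA'')$ back to $D(\CA)$ without raising degrees. I expect to secure this by a graded rank argument, comparing the Poincar\'e series of the free modules $D(\CA')$ and $D(\CA'')$ to see that the cokernel of $\rho$ cannot survive in the relevant degrees, after which Saito's criterion certifies that the lifted-and-twisted family is a genuine basis rather than merely an independent set of the right total degree. As an independent consistency check on the three exponent multisets $\{b_1,\dots,b_{\ell-1},b_\ell\}$, $\{b_1,\dots,b_{\ell-1},b_\ell-1\}$ and $\{b_1,\dots,b_{\ell-1}\}$, I would also use the deletion–restriction identity $\chi(\CA,t)=\chi(\CA',t)-\chi(\CA'',t)$ together with Terao's factorization of the characteristic polynomial of a free arrangement (\cite{orlikterao:arrangements}), which reproduces exactly the degree shifts appearing in (i)–(iii).
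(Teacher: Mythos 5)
First, a point of reference: the paper does not prove Theorem \ref{thm:add-del} at all; it quotes it from \cite{terao:freeI} and \cite[Thm.\ 4.51]{orlikterao:arrangements}. So your attempt can only be compared with that classical proof, and your scaffolding is indeed the classical one: the left-exact sequence $0 \to D(\CA')(-1) \xrightarrow{\ \cdot\alpha_H\ } D(\CA) \xrightarrow{\ \rho\ } D(\CA'')$ (cf.\ \cite[Props.\ 4.44--4.45]{orlikterao:arrangements}), Saito's criterion in its degree form, and the count $\sum_i b_i = |\CA|$. Those parts of your write-up are correct, including the verification that $\theta(\alpha_H)\in\alpha_H S$ for $\theta\in D(\CA)$, the descent $\bar\theta\in D(\CA'')$, and the identity $\ker\rho=\alpha_H D(\CA')$.

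The genuine gap is exactly at the step you yourself flag as the crux, and your proposed fix for it cannot work. In the addition step (ii)$+$(iii)$\Rightarrow$(i) the two modules whose Poincar\'e series you know, $D(\CA')$ and $D(\CA'')$, sit at the two \emph{ends} of the sequence, while the unknown $D(\CA)$ sits in the middle; left-exactness only yields the coefficientwise bound $P(D(\CA),t)\le t\,P(D(\CA'),t)+P(D(\CA''),t)$, and surjectivity of $\rho$ is precisely the assertion that this bound is attained --- which, after your own Saito argument, is equivalent to the conclusion (i). Worse, no argument using only the formal sequence and the outer Poincar\'e series can exist: the submodule $M=\alpha_H D(\CA')\subseteq D(\CA)$ fits into the same formal picture $0\to D(\CA')(-1)\to M\to D(\CA'')$ with identical outer terms, yet there the restriction map is zero. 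So surjectivity must be extracted from what $D(\CA)$ actually is, and this is where the classical proof inserts the ingredient your sketch lacks: one studies the evaluation map $\phi\colon D(\CA')\to\oS$, $\theta\mapsto\theta(\alpha_H)\bmod\alpha_H$, whose kernel is exactly $D(\CA)$, and uses freeness of $\CA''$ plus the exponent matching to control the image of $\phi$ (equivalently, to normalize a basis of $D(\CA')$ so that all but one member lies in $D(\CA)$); this is the content of the lemmas preceding \cite[Thm.\ 4.51]{orlikterao:arrangements}. A secondary gap of the same nature: your independence argument (``apply $\rho$, peel off $\alpha$, descend'') stalls after one step, since the peeled relation $\sum_{i<\ell}g_i\theta_i+f_\ell\eta_\ell=0$ involves $\eta_\ell\in D(\CA')$ which in general does not lie in $D(\CA)$, so $\rho$ can no longer be applied; the repair is to apply $\phi$ and to know $\phi(\eta_\ell)\neq 0$, which again requires the missing analysis. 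The implication (i)$+$(ii)$\Rightarrow$(iii) classically rests instead on a determinant/normal-form argument for the inclusion $D(\CA)\subseteq D(\CA')$ of free modules (the ``strong form'', \cite[Thm.\ 4.46]{orlikterao:arrangements}), which your sketch also does not supply.
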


The following is Terao's celebrated  
\emph{Factorization Theorem} for  
free arrangements.

\begin{theorem}
[{\cite[Thm.\ 4.137]{orlikterao:arrangements}}]
\label{thm:factorization}
If $\CA$ is free with $\exp\CA = \{ b_1, \ldots , b_\ell\}$, then 
\[
\chi(\CA,t) = \prod\limits_{i=1}^\ell (t - b_i).
\]
\end{theorem}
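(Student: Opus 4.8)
The plan is to deduce the factorization from the Solomon--Terao formula, which expresses the characteristic polynomial of \emph{any} central arrangement through the Hilbert series of the modules $D^p(\CA)$ of $\CA$-derivations of degree $p$ (with $D^1(\CA) = D(\CA)$ and $D^0(\CA) = S$). A tempting but insufficient first attempt is to induct on $|\CA|$ via the Addition--Deletion Theorem \ref{thm:add-del}: for a triple $(\CA,\CA',\CA'')$ with $\CA'$, $\CA''$ free and exponents as there, the statement for $\CA'$ and $\CA''$ combined with the standard deletion--restriction recursion $\chi(\CA,t)=\chi(\CA',t)-\chi(\CA'',t)$ yields it for $\CA$. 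This settles the inductively free case and, via \eqref{eq:chiproduct}, products (whose exponents are the union of the factors'); but it cannot reach an arbitrary free arrangement, since not every such arrangement arises by addition--deletion. A method independent of this recursion is therefore needed.

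First I would record the Solomon--Terao formula: for any central $\ell$-arrangement,
\[
\chi(\CA,t) = (-1)^\ell \lim_{x \to 1} \sum_{p=0}^{\ell} \mathrm{Poin}\bigl(D^p(\CA),x\bigr)\,\bigl(t(x-1)-1\bigr)^p,
\]
where $\mathrm{Poin}(M,x)$ is the Hilbert series of a graded $S$-module $M$ (see \cite{orlikterao:arrangements}). This identity is the substantial, freeness-free input, obtained by M\"obius inversion over $L(\CA)$ together with a homological Euler-characteristic computation for the complex built from the $D^p(\CA)$; I would cite it rather than reprove it.

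Next I would invoke freeness. The key structural fact is that when $\CA$ is free each $D^p(\CA)$ is again free and is identified with the exterior power $\wedge^p D(\CA)$. Fixing a homogeneous basis $\theta_1,\dots,\theta_\ell$ of $D(\CA)$ with $\pdeg\theta_i = b_i$, the products $\theta_{i_1}\wedge\cdots\wedge\theta_{i_p}$ form a homogeneous basis of $D^p(\CA)$ of degrees $b_{i_1}+\cdots+b_{i_p}$. As $S$ is a polynomial ring in $\ell$ variables, so that $\mathrm{Poin}(S,x)=(1-x)^{-\ell}$, this gives
\[
\mathrm{Poin}\bigl(D^p(\CA),x\bigr) = \frac{e_p\bigl(x^{b_1},\dots,x^{b_\ell}\bigr)}{(1-x)^\ell},
\]
with $e_p$ the $p$-th elementary symmetric polynomial.

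Finally I would assemble the pieces and pass to the limit. Summing over $p$ and using $\sum_p e_p(u_1,\dots,u_\ell)z^p=\prod_{i=1}^\ell(1+zu_i)$ collapses the Solomon--Terao sum into
\[
\sum_{p=0}^{\ell} \mathrm{Poin}\bigl(D^p(\CA),x\bigr)z^p = \frac{1}{(1-x)^\ell}\prod_{i=1}^{\ell}\bigl(1+z\,x^{b_i}\bigr),
\]
evaluated at $z=t(x-1)-1$. Writing $x=1+\varepsilon$ and expanding to first order gives $1+z\,x^{b_i}=(t-b_i)\varepsilon+O(\varepsilon^2)$, so the product equals $\varepsilon^\ell\prod_i(t-b_i)+O(\varepsilon^{\ell+1})$; dividing by $(1-x)^\ell=(-1)^\ell\varepsilon^\ell$ and reinstating the outer sign $(-1)^\ell$ leaves exactly $\prod_{i=1}^\ell(t-b_i)$ in the limit. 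The main obstacle is squarely the Solomon--Terao formula itself, together with the identification $D^p(\CA)\cong\wedge^p D(\CA)$ in the free case; granting these, the Hilbert-series computation and the elementary limit are routine.
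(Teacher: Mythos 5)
Your proposal is correct, and it is essentially the same argument as the one behind the statement in the paper: the paper gives no proof of its own but cites \cite[Thm.~4.137]{orlikterao:arrangements}, whose proof is precisely this route --- the Solomon--Terao formula, the identification of $D^p(\CA)$ with $\wedge^p D(\CA)$ for free $\CA$ (giving the Hilbert series $e_p(x^{b_1},\dots,x^{b_\ell})/(1-x)^\ell$), and the limit computation you carry out. Your sign bookkeeping and the expansion $1+z\,x^{b_i}=(t-b_i)\varepsilon+O(\varepsilon^2)$ check out, so there is nothing to add.
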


\subsection{Inductively Free Arrangements}
\label{ssect:indfree}

An iterative application of the 
addition part of Theorem \ref{thm:add-del} leads to the class of  
\emph{inductively free} arrangements.

\begin{definition}
[{\cite[Def.~4.53]{orlikterao:arrangements}}]
\label{def:indfree}
The class $\CIF$ of \emph{inductively free} arrangements 
is the smallest class of arrangements subject to
\begin{itemize}
\item[(i)] $\Phi_\ell \in \CIF$ for each $\ell \ge 0$;
\item[(ii)] if there exists a hyperplane $H_0 \in \CA$ such that both
$\CA'$ and $\CA''$ belong to $\CIF$, and $\exp \CA '' \subseteq \exp \CA'$, 
then $\CA$ also belongs to $\CIF$.
\end{itemize}
We denote by $\CIF_\ell$ the subclass of $\ell$-arrangements in $\CIF$.
\end{definition}

\subsection{Divisionally Free Arrangements}
\label{ssect:divfree}
First we recall the key result from \cite{abe:divfree}.

\begin{theorem}
[{\cite[Thm.~1.1]{abe:divfree}}]
\label{thm:div-free}
Let $\CA \ne \Phi_\ell$.
Suppose there is a hyperplane $H$ in $\CA$ such that the restriction 
$\CA^H$ is free and that 
$\chi(\CA^H,t)$ divides $\chi(\CA,t)$.
Then $\CA$ is free.
\end{theorem}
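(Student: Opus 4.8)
The plan is to extract the forced exponents of $\CA$ from the divisibility hypothesis and then certify them by Yoshinaga's freeness criterion, running an induction on $\ell = \dim V$.

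First I would pin down the candidate exponents. Since $\CA^H$ is free, Theorem \ref{thm:factorization} gives $\chi(\CA^H,t) = \prod_{i=1}^{\ell-1}(t-e_i)$ with $\exp \CA^H = \{e_1,\dots,e_{\ell-1}\}$. As $\chi(\CA,t)$ and $\chi(\CA^H,t)$ are monic of degrees $\ell$ and $\ell-1$ with integer coefficients, the hypothesis $\chi(\CA^H,t)\mid\chi(\CA,t)$ yields, by Gauss's Lemma, a unique $e_0 \in \BBZ$ with $\chi(\CA,t) = (t-e_0)\prod_{i=1}^{\ell-1}(t-e_i)$. Hence the only possible exponents of $\CA$ are $\{e_0,e_1,\dots,e_{\ell-1}\}$, and it remains to show this multiset is actually realised by a basis of $D(\CA)$.

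Next I would pass to the Ziegler restriction $(\CA^H, m^H)$: the multiarrangement on $H$ in which each $X \in \CA^H$ carries the multiplicity $m^H(X)$ counting the hyperplanes of $\CA \setminus \{H\}$ that meet $H$ in $X$. Yoshinaga's criterion (valid for $\ell \ge 4$) states that $\CA$ is free exactly when $(\CA^H, m^H)$ is free, say with exponents $(d_1,\dots,d_{\ell-1})$, and the reduced characteristic polynomial $\chi(\CA,t)/(t-1)$ equals $\prod_j (t-d_j)$. The factorization produced above already exhibits $\chi(\CA,t)/(t-1)$ as a product of $\ell-1$ integral linear factors, so the polynomial half of Yoshinaga's condition is essentially handed to us; the entire difficulty is concentrated in proving that the \emph{multiarrangement} $(\CA^H, m^H)$ is free. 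This is the main obstacle, since freeness of the underlying \emph{simple} restriction $\CA^H$ does not by itself force freeness of $(\CA^H, m^H)$.

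To attack the Ziegler freeness I would induct on $\ell$, the base case being the fact that every multiarrangement on a $2$-dimensional space is free. For the inductive step I would compare $\chi(\CA,t)/(t-1)$ with the Abe--Terao--Wakefield characteristic polynomial of $(\CA^H, m^H)$: Yoshinaga's local--global comparison relates the second Betti number $b_2$ of $\CA$ to the corresponding coefficient of the multiarrangement characteristic polynomial, with freeness of $(\CA^H, m^H)$ detected by equality, and the role of the divisibility hypothesis is precisely to pin the $b_2$-coefficient at the value that makes this an equality. The delicate point — and the step I expect to be hardest to make rigorous — is this transfer of the \emph{global} divisibility $\chi(\CA^H,t)\mid\chi(\CA,t)$ into the sharp $b_2$-equality governing the Ziegler restriction; a purely local argument over codimension-$2$ flats is insufficient here, in line with the fact that divisional freeness is not inherited by arbitrary localizations. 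Finally, the low-rank cases $\ell\le 3$, where Yoshinaga's criterion does not apply, I would settle directly with the Addition--Deletion Theorem \ref{thm:add-del} together with the freeness of all arrangements of rank at most $2$.
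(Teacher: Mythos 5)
A remark before the comparison: the paper contains \emph{no} proof of Theorem \ref{thm:div-free}; it is quoted as \cite[Thm.~1.1]{abe:divfree} and used as a black box. So your proposal can only be measured against Abe's published proof, and in outline it does follow the same route: candidate exponents from Theorem \ref{thm:factorization} plus the divisibility, then Yoshinaga's criterion for the Ziegler restriction $(\CA^H,m^H)$, an induction on $\ell$, and a comparison of second Betti numbers. The skeleton is the correct one, but the two assertions you use to make it walk are both incorrect, and the step you yourself flag as hardest is exactly the new content of Abe's argument, which the proposal never supplies.

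First, the claim that the ``polynomial half'' of Yoshinaga's condition is essentially handed to us is wrong. The divisibility gives $\chi(\CA,t)/(t-1)=(t-d)\cdot\chi(\CA^H,t)/(t-1)$ with $d=|\CA|-|\CA^H|$, a factorization whose roots are $d$ together with the exponents $e_1,\dots,e_{\ell-1}$ of the \emph{simple} restriction $\CA^H$. What Yoshinaga's criterion needs is that $\chi(\CA,t)/(t-1)$ equal the Abe--Terao--Wakefield characteristic polynomial of the \emph{multirestriction} $(\CA^H,m^H)$, equivalently (once that multiarrangement is known to be free) that the Ziegler exponents $d_1,\dots,d_{\ell-1}$ coincide with the multiset $\{d,e_1,\dots,e_{\ell-1}\}$. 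There is no a priori relation between $\chi(\CA^H,t)$ and the characteristic polynomial of $(\CA^H,m^H)$; producing one when $\CA^H$ is free is precisely Abe's key lemma --- an inequality of the shape $b_2\bigl((\CA^H,m^H)\bigr)\ge b_2(\CA^H)+(d-1)(|\CA^H|-1)$, which, played against Yoshinaga's opposite inequality $b_2(\CA)-|\CA|+1\ge b_2\bigl((\CA^H,m^H)\bigr)$ and the identity $b_2(\CA)=b_2(\CA^H)+d\,|\CA^H|$ forced by the divisibility, squeezes everything into equalities. Without this lemma neither half of Yoshinaga's criterion is available. Second, your claim that in the local--global comparison ``freeness of $(\CA^H,m^H)$ is detected by equality'' is false: equality in Yoshinaga's $b_2$-inequality characterizes local freeness of $\CA$ along $H$ (freeness of the localizations $\CA_X$ at rank-$3$ flats $X\subset H$), not freeness of the multirestriction, which is an independent algebraic condition and the second, separately required, hypothesis of the criterion; in Abe's proof it is extracted from the equality case of his own inequality together with the induction, not from Yoshinaga's. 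As written, your inductive step concludes multirestriction freeness from a statement that cannot yield it.

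The base case is also not covered by your plan. For $\ell=3$ you cannot fall back on Theorem \ref{thm:add-del}: only $\CA''=\CA^H$ is known to be free, and the Addition--Deletion Theorem needs two free members of the triple to produce the third. The $3$-dimensional case is a genuine theorem: one uses Yoshinaga's criterion for $3$-arrangements (it is valid there, so your restriction ``valid for $\ell\ge4$'' is unnecessary) plus control of the Ziegler exponents $(d_1,d_2)$. Indeed, writing $q=|\CA^H|-1$ and $r=|\CA|-|\CA^H|$, Yoshinaga's inequality $d_1d_2\le qr$ together with $d_1+d_2=q+r$ only spreads $(d_1,d_2)$ outward from $(q,r)$; one needs the reverse bound $\min(d_1,d_2)\ge\min(q,r)$ to pin $\{d_1,d_2\}=\{q,r\}$, and that is again a multiarrangement statement, not a rank-$2$ triviality.
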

  
Theorem \ref{thm:div-free} can be viewed as a strengthening of the addition
part of Theorem \ref{thm:add-del}. An iterative  
application leads to the class $\CDF$.

\begin{definition}
[{\cite[Def.~1.5]{abe:divfree}}]
\label{def:divfree}
An arrangement $\CA$ is called 
\emph{divisionally free} if either $\ell \le 2$, $\CA = \Phi_\ell$, or
else there is a flag of subspaces $X_i$ of rank $i$ in $L(\CA)$
\[
X_0 = V \supset X_1 \supset X_2 \supset \cdots \supset X_{\ell -2}
\]
 so that $\chi(\CA^{X_i},t)$ divides $\chi(\CA^{X_{i-1}},t)$, for 
$i = 1, \ldots, \ell-2$.
Denote this class by $\CDF$.

We denote by $\CDF_\ell$ the subclass of $\ell$-arrangements in $\CDF$.
\end{definition}

In \cite[Thms.~1.3 and ~1.6]{abe:divfree}, Abe observed that $\CIF \subsetneq \CDF$
(the reflection arrangement of the complex reflection group $G_{31}$ is 
divisionally free but not inductively free), each $\CA$ in $\CDF$ is free and 
Terao's Conjecure \ref{conj:terao} is valid in $\CDF$.

\subsection{Additionally Free and Stair-Free Arrangements}
\label{ssect:divfree}
  
Using the addition part of Theorem \ref{thm:add-del}, it is natural to consider the following class.

\begin{definition}
[{\cite[Def.~1.6]{abe:sf}}]
\label{def:addfree}
An arrangement $\CA$ is called 
\emph{additionally free} if there is a filtration 
\[
\Phi_\ell = \CA_0 \subset \CA_1 \subset \ldots \subset \CA_n = \CA,
\]
of $\CA$, where each $\CA_i$ is free with $|\CA_i| = i$ and $|\CA| = n$.
Denote this class by $\CAF$.

We denote by $\CAF_\ell$ the subclass of $\ell$-arrangements in $\CAF$.
\end{definition}

The members of $\CAF$ are constructed by means of the
addition part of Theorem \ref{thm:add-del}. In particular, 
each member of $\CAF$ is free. 
Clearly, $\CIF \subseteq \CAF$.
In \cite[Thm.~1.8]{abe:sf}, Abe showed 
that Terao's conjecture is still valid within 
$\CAF$.

Combining the procedures of addition from Theorem \ref{thm:add-del}
and the construction of freeness from Theorem \ref{thm:div-free},
we obtain the following new natural class.

\begin{definition}
[{\cite[Def.~4.2]{abe:sf}}]
\label{def:sf}
An arrangement $\CA$ is called 
\emph{stair-free} if $\CA$ is build up from some empty arrangement
by consecutive applications of addition from Theorem \ref{thm:add-del}
or an extension along Theorem \ref{thm:div-free}.
Denote this class by $\CSF$.

We denote by $\CSF_\ell$ the subclass of $\ell$-arrangements in $\CSF$.
\end{definition}
 
 The significance of this new class $\CSF$ stems from the following result.

\begin{theorem}
[{\cite[Thm.~4.3]{abe:sf}}]
\label{thm:sf-free}
With the notation from above, we have 
\begin{itemize}
\item[(i)] every member of $\CSF$ is free;
\item[(ii)] $\CIF \subsetneq \CDF \cup \CAF \subseteq \CSF$; 
\item[(iii)] Terao's Conjecture \ref{conj:terao} is valid within $\CSF$.
\end{itemize}
\end{theorem}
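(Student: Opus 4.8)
The plan is to treat the three assertions in turn, all three driven by induction along the stair-free construction of Definition \ref{def:sf}. For a given $\CA \in \CSF$ I fix a witnessing construction: a finite sequence of arrangements, starting from an empty arrangement, in which each term arises from earlier ones either by the addition part of Theorem \ref{thm:add-del} or by an extension along Theorem \ref{thm:div-free}. Since a division step invokes a restriction $\CA^H$, which must itself be stair-free, the natural induction is on the pair $(\ell, |\CA|)$ ordered lexicographically. For part (i) I argue by this induction. The base case $\Phi_\ell$ is free. For the inductive step I distinguish the type of the last operation. If $\CA$ arises by addition, then in the triple $(\CA, \CA', \CA'')$ both $\CA'$ (fewer hyperplanes) and $\CA''$ (smaller dimension) are stair-free, hence free by induction, and the addition part of Theorem \ref{thm:add-del} yields freeness of $\CA$. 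If $\CA$ arises by a division step, then $\CA^H$ is stair-free of smaller dimension, hence free by induction, and $\chi(\CA^H,t)$ divides $\chi(\CA,t)$ by hypothesis, so Theorem \ref{thm:div-free} gives that $\CA$ is free. This is the routine part.

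Part (ii) is a matter of unwinding the definitions. The inclusion $\CIF \subseteq \CAF$ is immediate, an inductive construction being a special instance of a free filtration. For $\CAF \subseteq \CSF$ I observe that the defining filtration of an additionally free arrangement is precisely a chain of addition steps, and for $\CDF \subseteq \CSF$ that the defining flag of a divisionally free arrangement realizes $\CA$ by a single division step from the restriction $\CA^{X_1}$, which is itself divisionally free via the truncated flag and hence stair-free by induction on $\ell$; both operations are among those permitted in Definition \ref{def:sf}. The one point needing care for $\CAF$ is that freeness of two consecutive members $\CA_{i-1} \subset \CA_i$ of the filtration genuinely places the triple in the addition framework of Theorem \ref{thm:add-del}. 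Finally, the strict inclusion $\CIF \subsetneq \CDF \cup \CAF$ is inherited from the known strict inclusion $\CIF \subsetneq \CDF$, witnessed by the reflection arrangement of $G_{31}$.

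Part (iii) is the substantive assertion. Here I must show that if $\CA \in \CSF$ and $\CB$ is any arrangement, over the same field, with $L(\CB) \cong L(\CA)$, then $\CB$ is free. The strategy is again induction along the construction of $\CA$, transporting each stair-free step across a lattice isomorphism $\phi \colon L(\CA) \to L(\CB)$. The crucial combinatorial inputs are that $\phi$ carries atoms to atoms, hence induces a bijection $\CA \to \CB$, $H \mapsto K$, and that for this matched pair $\phi$ descends to isomorphisms $L(\CA') \cong L(\CB')$ and $L(\CA^H) \cong L(\CB^K)$, because the lattice of a deletion, and of a restriction, is read off order-theoretically from $L(\CA)$ together with the marked atom. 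Consequently characteristic polynomials match along the construction, and by the Factorization Theorem \ref{thm:factorization} so do the exponents of the free pieces. In the addition case the condition $\exp \CA'' \subseteq \exp \CA'$ is thereby purely combinatorial and transfers to $\CB$, while $\CB'$ and $\CB''$ are free by induction; in the division case the divisibility $\chi(\CA^H,t) \mid \chi(\CA,t)$ transfers verbatim and $\CB^K$ is free by induction. Either way $\CB$ is free, exactly as in part (i).

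The hardest part will be the bookkeeping in (iii): one must verify that every intermediate arrangement occurring in the construction of $\CA$ is itself stair-free, so that the induction hypothesis applies to its lattice-isomorphic counterpart, and that the order-theoretic descriptions of the deletion and restriction lattices are genuinely preserved by $\phi$ for the marked atom. I expect the subtlety to lie less in any single freeness step, which is governed cleanly by Theorems \ref{thm:add-del} and \ref{thm:div-free}, than in organizing the transfer so that the combinatorial data controlling each step, the matched exponents and the divisibility of characteristic polynomials, is seen to depend only on $L(\CA)$. Once this transfer is in place, freeness of $\CB$ follows step by step, which is precisely the statement that freeness is combinatorial within $\CSF$.
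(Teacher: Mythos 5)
First, a point of orientation: the paper does not prove this statement at all; it is imported verbatim from Abe \cite[Thm.~4.3]{abe:sf}, so your proposal has to be measured against what a correct proof actually requires rather than against an argument in this paper. Your treatment of parts (i) and (ii) is essentially fine: under the paper's Definition \ref{def:sf} each step of a stair-free construction carries the hypotheses of Theorem \ref{thm:add-del} or Theorem \ref{thm:div-free} with it, so (i) is nearly tautological, and the inclusions in (ii) unwind as you say, with $\CA(G_{31})$ witnessing strictness.

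Part (iii), however, has a genuine gap, and it sits exactly where you flagged "bookkeeping." In an addition step the restriction $\CA''$ is required to be \emph{free} with $\exp\CA''\subseteq\exp\CA'$, but it is \emph{not} required to be stair-free, and in general it is not: this very paper exhibits a counterexample to your hoped-for verification. The arrangement $\CD$ of \S \ref{sec:C} lies in $\CAF\subseteq\CSF$, and the final addition step of its free filtration (Table \ref{indtable:D}) uses the restriction $\CD''\cong\CD^{\ker x_4}$, which is free with exponents $\{1,5,5,5\}$ but admits neither a deletion with matching exponents nor a restriction whose characteristic polynomial divides $\chi(\CD'',t)$; hence $\CD''$ is not additionally free, not divisionally free, and not stair-free. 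Consequently, for a lattice-isomorphic $\CB$ your induction hypothesis says nothing about $\CB''$: Terao's Conjecture is simply unknown for arrangements with the lattice of $\CD''$, so you cannot conclude "$\CB''$ is free by induction," and without freeness of $\CB''$ the classical addition part of Theorem \ref{thm:add-del} cannot be applied to $\CB$. The same obstruction blocks any attempt to substitute Theorem \ref{thm:div-free}, since it too demands freeness of the restriction. This is not a repairable bookkeeping issue within the toolkit stated in this paper: the actual proof requires Abe's new combinatorial addition--deletion theorem (the main result of \cite{abe:deletion} and \cite{abe:sf}), which asserts that freeness of $\CB'$ together with the purely combinatorial divisibility $\chi(\CB'',t)\mid\chi(\CB,t)$ already forces freeness of $\CB$, with \emph{no} freeness hypothesis on $\CB''$. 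With that theorem the transfer argument you outline goes through (deletion handled by induction on $|\CA|$ inside $\CSF_\ell$, division steps by induction on $\ell$, divisibility transported across the lattice isomorphism); without it, the induction collapses at every addition step whose restriction leaves the class $\CSF$.
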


Each of the classes of free, 
inductively free and divisionally free 
arrangements is compatible with 
the product construction for arrangements, 
cf.~\cite[Prop.\ 4.28]{orlikterao:arrangements}, 
\cite[Prop.~2.10]{hogeroehrle:inductivelyfree},
\cite[Prop.~2.9]{roehrle:divfree}.
Next we observe that this also holds for the classes $\CAF$
and $\CSF$.

\begin{proposition}
\label{prop:product-sffree}
Let $(\CA_1, V_1),  (\CA_2, V_2)$ be two arrangements.
Then  $\CA = (\CA_1 \times \CA_2, V_1 \oplus V_2)$ is 
stair-free (resp.~additionally free) if and only if both 
$\CA_1$ and $\CA_2$ are 
stair-free (resp.~additionally free).
\end{proposition}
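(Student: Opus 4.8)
The plan is to prove the biconditional for each class separately, treating the two directions asymmetrically. The core tool in both cases is the product formula for characteristic polynomials, equation \eqref{eq:chiproduct}, together with the compatibility \eqref{eq:restrproduct} of restriction with the product construction. These two identities are what let the defining data of $\CA = \CA_1 \times \CA_2$ be reassembled from the factors, and I expect them to carry most of the argument.

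For the ``if'' direction (each factor in the class implies the product is in the class), I would proceed by interleaving the construction steps of the two factors. For $\CAF$, suppose $\Phi_{\ell_1} = \CA_{1,0} \subset \cdots \subset \CA_{1,n_1} = \CA_1$ and $\Phi_{\ell_2} = \CA_{2,0} \subset \cdots \subset \CA_{2,n_2} = \CA_2$ are additive filtrations. First I would build $\Phi_{\ell_1} \times \Phi_{\ell_2} \subset \cdots$ by adjoining the hyperplanes of $\CA_1$ one at a time (i.e.\ forming $\CA_{1,i} \times \Phi_{\ell_2}$), then continuing with the hyperplanes of $\CA_2$ (forming $\CA_1 \times \CA_{2,j}$). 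At each step the ambient arrangement grows by exactly one hyperplane, and freeness of each intermediate $\CA_{1,i} \times \CA_{2,j}$ follows from the fact that freeness is preserved under products, \cite[Prop.~4.28]{orlikterao:arrangements}; the exponents of a product are the union of the exponents of the factors. This yields a valid additive filtration of $\CA$. For $\CSF$ the same interleaving works, but now each individual addition or division step performed on a factor must be lifted to the product: an addition step on $\CA_1$ becomes an addition step on $\CA_{1,i}\times\CA_2$, while a division step on $\CA_1$ (adding a hyperplane $H_1$ with $\CA_{1,i}^{H_1}$ free and $\chi(\CA_{1,i}^{H_1},t)\mid\chi(\CA_{1,i},t)$) lifts using \eqref{eq:restrproduct} and \eqref{eq:chiproduct}: one has $(\CA_{1,i}\times\CA_2)^{H_1\oplus V_2} = \CA_{1,i}^{H_1}\times\CA_2$, whose characteristic polynomial divides that of $\CA_{1,i}\times\CA_2$ precisely because the factor $\chi(\CA_2,t)$ is common and the remaining divisibility holds by hypothesis.

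For the ``only if'' direction I would argue by induction on $|\CA| = |\CA_1| + |\CA_2|$, peeling off hyperplanes. Given a stair-free (resp.\ additively free) structure on $\CA$, consider the last construction step, which adjoins a single hyperplane $H$ to an arrangement $\CB \subset \CA$ with $\CB$ in the class. Every hyperplane of $\CA = \CA_1\times\CA_2$ is either of the form $H_1\oplus V_2$ or $V_1\oplus H_2$; say it is the former, coming from $H_1\in\CA_1$. Then $\CB = \CA_1'\times\CA_2$ where $\CA_1' = \CA_1\setminus\{H_1\}$, and by the inductive hypothesis both $\CA_1'$ and $\CA_2$ lie in the class. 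It then remains to promote $\CA_1'$ back to $\CA_1$ by reinstating $H_1$ as a single construction step within $\CA_1$; whether this step is an addition or a division is inherited from the corresponding step in $\CA$, again via \eqref{eq:restrproduct} and \eqref{eq:chiproduct} to transport the freeness and divisibility conditions down to the first factor.

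The main obstacle I anticipate is the division-step bookkeeping in the ``only if'' direction for $\CSF$: one must check that a division step in the product genuinely descends to a division step in a single factor, rather than mixing the two. Concretely, when the last step is a division adjoining $H_1\oplus V_2$, I must verify that $\chi(\CA_1'^{H_1},t)$ divides $\chi(\CA_1',t)$, and this should follow by cancelling the common factor $\chi(\CA_2,t)$ from the product divisibility $\chi(\CA_1'^{H_1},t)\,\chi(\CA_2,t)\mid\chi(\CA_1',t)\,\chi(\CA_2,t)$ in the polynomial ring $\BBZ[t]$, which is an integral domain. The analogous concern for $\CAF$ is milder, since there the only requirement is freeness of each intermediate arrangement, and freeness of a product is equivalent to freeness of both factors by \cite[Prop.~4.28]{orlikterao:arrangements}.
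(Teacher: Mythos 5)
Your treatment of $\CAF$ is correct: the interleaved filtration $\Phi \subset \cdots \subset \CA_{1,i}\times\Phi_{\ell_2} \subset \cdots \subset \CA_1\times\CA_{2,j} \subset \cdots \subset \CA$ works for the forward direction because membership in $\CAF$ only requires each intermediate arrangement to be free, and freeness of products is exactly \cite[Prop.~4.28]{orlikterao:arrangements}; the backward direction by peeling off the last hyperplane is also fine there. The genuine gap is in your handling of division steps for $\CSF$, and it stems from a misreading of Definition \ref{def:sf}: in a stair-free construction, a division step establishing $\CA \in \CSF$ has the \emph{restriction} $\CA^H$ as its predecessor, and that restriction must itself belong to $\CSF$ --- it is not enough that it be free with $\chi(\CA^H,t) \mid \chi(\CA,t)$, as you assume when you describe a division step on $\CA_1$ as ``adding a hyperplane $H_1$ with $\CA_{1,i}^{H_1}$ \emph{free}.'' (Requiring mere freeness of the restriction would destroy the recursive combinatorial verifiability that makes Terao's Conjecture \ref{conj:terao} provable within $\CSF$.)

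This misreading breaks both directions of your $\CSF$ argument. In the forward direction, lifting a division step on $\CA_1$ to the product requires $(\CA_{1,i}\times\CA_2)^{H_1\oplus V_2} = \CA_{1,i}^{H_1}\times\CA_2$ to lie in $\CSF$, not merely to be free with the right divisibility; but that is itself an instance of the proposition being proved (a product of the stair-free arrangements $\CA_{1,i}^{H_1}$ and $\CA_2$), so your direct interleaving is circular unless the whole argument is organized as an induction on the total number of hyperplanes, as the paper does --- the restriction product has strictly fewer hyperplanes, so the induction hypothesis applies to it. In the backward direction you assume ``the last construction step adjoins a single hyperplane $H$ to an arrangement $\CB = \CA\setminus\{H\}$ in the class'' and apply induction to $\CB$; when the last step is a division, this hypothesis is simply not available --- what you know is $\CA^H \in \CSF$ together with $\chi(\CA^H,t)\mid\chi(\CA,t)$. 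The repair is to apply the induction hypothesis to $\CA^H = \CA_1^{H_1}\times\CA_2$, yielding $\CA_1^{H_1},\ \CA_2 \in \CSF$; your cancellation of $\chi(\CA_2,t)$ in the domain $\BBZ[t]$ is the right idea, but it must be applied to $\CA_1$, giving $\chi(\CA_1^{H_1},t)\mid\chi(\CA_1,t)$, not to $\CA_1' = \CA_1\setminus\{H_1\}$ as in your writeup (where $\CA_1'^{\,H_1}$ is not even the relevant restriction); then $\CA_1\in\CSF$ follows by a division step. With these corrections your argument becomes essentially the paper's proof.
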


\begin{proof}
First suppose that both $\CA_1$ and $\CA_2$ are stair-free.
We claim that then so is $\CA$.
We argue via induction on $|\CA|$. If both $\CA_1$ and $\CA_2$ are empty, 
there is nothing to show.
So suppose that $|\CA| \ge 1$ and 
that the claim holds for any 
product of stair-free arrangements with fewer  than $|\CA|$ 
hyperplanes.
Without loss of generality there exists an $H_1$ in $\CA_1$ 
such that either $\CA_1\setminus\{H_1\}$ 
or $\CA_1^{H_1}$  still belongs to $\CSF$.
Consequently, in the first instance
$(\CA_1\setminus\{H_1\}) \times \CA_2 \in \CSF$, 
by induction.
Moreover, as both 
$\CA_1$ and $\CA_1\setminus\{H_1\}$ are free, 
it follows from the strong form of the restriction part of 
Theorem \ref{thm:add-del} (\cite[Thm.\ 4.46]{orlikterao:arrangements})
that also the restriction $\CA_1^{H_1}$ is free and 
$\exp( \CA_1^{H_1}) \subset \exp( \CA_1\setminus\{H_1\})$.
Consequently, setting $H := H_1 \oplus V_2 \in \CA$ and using \eqref{eq:restrproduct} and   
\cite[Prop.\ 4.28]{orlikterao:arrangements}, we have
\[
\exp( \CA^{H}) =  \{\exp( \CA_1^{H_1}), \exp(\CA_2)\} \subset \{\exp( \CA_1\setminus\{H_1\}), \exp(\CA_2) \} = \exp( \CA\setminus\{H\}),
\]
and so by the addition part of Theorem \ref{thm:add-del}, 
$\CA$ also belongs to $\CSF$.

In the second instance when $\CA_1^{H_1}$  still belongs to $\CSF$, we have that
$\chi( \CA_1^{H_1}, t)$ divides $\chi( \CA_1, t)$.
For $H = H_1 \oplus V_2 \in \CA$, we see that 
$\CA^H = \CA_1^{H_1} \times \CA_2$ is a product of 
stair-free arrangements with $|\CA^H| < |\CA|$, by \eqref{eq:restrproduct}. 
So, by our induction hypothesis, 
$\CA^H$ is stair-free.
In addition, since 
$\chi( \CA^{H}, t) = \chi( \CA_1^{H_1}, t) \cdot \chi( \CA_2, t)$ divides 
$\chi( \CA_1, t) \cdot \chi( \CA_2, t) = \chi( \CA, t)$, 
cf.~\eqref{eq:chiproduct}, 
we infer that $\CA$ belongs to $\CSF$, by Theorem \ref{thm:div-free}.

Conversely, suppose that 
$\CA = \CA_1 \times \CA_2$ belongs to 
$\CSF$. We claim that then both $\CA_1$ and $\CA_2$ also belong to $\CSF$.
Again we argue by induction on $|\CA|$.
If both $\CA_1$ and $\CA_2$ are empty, 
there is nothing to show.
So suppose that $|\CA| \ge 1$ 
and that the claim holds for any 
product in $\CSF$ with fewer than $|\CA|$ hyperplanes.
Without loss of generality we may assume that there is an 
$H = H_1 \oplus V_2$ in $\CA$ such that either 
$ \CA\setminus\{H\} =  \CA_1\setminus\{H_1\} \times \CA_2$ 
or  $\CA^H$ belongs to $\CSF$
and $\chi( \CA^H, t)$ divides $\chi( \CA, t)$ in the second instance.
Thus in the first case, by our induction hypothesis, both  
$\CA_1\setminus\{H_1\}$ and $\CA_2$ also belong to $\CSF$.
Once again, by the
strong form of the restriction part of 
Theorem \ref{thm:add-del} (\cite[Thm.\ 4.46]{orlikterao:arrangements})
also the restriction $\CA^{H}$ is free and 
$\exp( \CA^{H}) \subset \exp( \CA\setminus\{H\})$.
Thus it follows from 
\cite[Prop.\ 4.28]{orlikterao:arrangements} that
\[
\{\exp( \CA_1^{H_1}), \exp(\CA_2)\} = \exp( \CA^{H}) \subset \exp( \CA\setminus\{H\}) = \{\exp( \CA_1\setminus\{H_1\}), \exp(\CA_2) \},
\]
and so $\exp( \CA_1^{H_1}) \subset \exp( \CA_1\setminus\{H_1\})$.
Therefore, by Theorem \ref{thm:add-del}, 
$\CA_1$ also belongs to $\CSF$.

Now consider the second case when $\CA^H$ belongs to $\CSF$
and $\chi( \CA^H, t)$ divides $\chi( \CA, t)$.
Since $|\CA^H| < |\CA|$ and  
$\CA^H = \CA_1^{H_1} \times \CA_2$, 
both $\CA_1^{H_1}$ and $\CA_2$ belong to $\CSF$, by our 
induction hypothesis.
Moreover,  since 
$\chi( \CA^{H}, t) = \chi( \CA_1^{H_1}, t) \cdot \chi( \CA_2, t)$ 
and 
$\chi( \CA, t) = \chi( \CA_1, t) \cdot \chi( \CA_2, t)$,
cf.~\eqref{eq:chiproduct}, 
it follows that 
$\chi( \CA_1^{H_1}, t)$ divides $\chi( \CA_1, t)$.
Therefore, also $\CA_1$ belongs to $\CSF$.

The statement for $\CAF$ of the proposition follows from just parts of the argument above.
\end{proof}

\begin{remark}
Clearly, $\CAF$ is closed under taking localizations. For, freeness is closed under taking localizations 
(\cite[Thm.\ 4.37]{orlikterao:arrangements}), 
so that a free chain of a member of $\CAF$ descends to a free chain of any 
of its localizations by removing 
redundant hyperplanes.
In contrast, by \cite[Ex.~2.16]{roehrle:divfree}, $\CDF$ is not closed under taking localizations, thus neither is $\CSF$.
\end{remark}

We close this section by discussing the reflection arrangements that belong to $\CSF$.

\begin{example}
Let $W$ be an irreducible unitary reflection group 
with $\CA(W)$ its reflection arrangement consisting of the hyperplanes associated with reflections in $W$,
see \cite[\S 6]{orlikterao:arrangements}.
Then $\CA(W)$ belongs to $\CSF$ if and only
if either $\CA(W)$ is inductively free or else $W = G_{31}$.
For, by Theorem \ref{thm:sf-free} and \cite[Thm.~1.6]{abe:divfree},
each inductively free reflection arrangement and also $\CA(G_{31})$ belongs to $\CSF$.
In contrast, none of the remaining irreducible, non-inductively free reflection arrangements $\CA(W)$ belongs to $\CSF$.
Indeed, for any such  $\CA(W)$ and any choice of hyperplane, $\CA(W)'$ fails to be free. Moreover,  
by \cite[Cor.~1.3, Cor.~2.18]{hogeroehrle:inductivelyfree}, 
$\exp \CA(W)'' \not\subseteq \exp \CA(W)$, 
so that $\chi( \CA(W)'', t)$ does not divide $\chi( \CA(W), t)$. So $\CA(W) \notin \CSF$, by Definition \ref{def:sf}.
\end{example}

\section{Proof of Theorem \ref{thm:abe-conj4.4}}
\label{sec:proofs}

\subsection{}
Observe that in dimension $3$ we have 
$\CSF_3 = \CDF_3 \cup \CAF_3 = \CDF_3  = \CIF_3$. For, since every rank $2$ arrangement is already inductively free,
the result follows from \cite[Thm.\ 4.46, Prop.\ 4.52]{orlikterao:arrangements}. Consequently, examples to demonstrate 
the statements claimed in Theorem \ref{thm:abe-conj4.4} can only occur in dimension at least $4$.

\subsection{}
\label{sub:B}
We first consider the inductively free arrangement $\CA$ 
of rank $7$ consisting of $32$ hyperplanes, 
with induction table given in Table \ref{indtable:A} below.

\begin{table}[h] \tiny
\begin{tabular}{lll}  \hline
  $\exp \CA'$ & $\alpha_H$ & $\exp \CA''$\\
\hline\hline
$0, 0, 0, 0, 0, 0, 0$ & $x_1 + x_2 + 2x_3 + 2x_4+ 2x_5 + x_6 + x_7$ & $0, 0, 0, 0, 0, 0$\\
$0, 0, 0, 0, 0, 0, 1$ & $x_1 + 2x_2 + 2x_3 + 3x_4 + 2x_5 + x_6 + x_7$ & $0, 0, 0, 0, 0, 1$\\
$0, 0, 0, 0, 0, 1, 1$ & $x_2 + x_4$ & $0, 0, 0, 0, 0, 1$\\
$0, 0, 0, 0, 0, 1, 2$ & $x_4$ & $0, 0, 0, 0, 1, 2$\\
$0, 0, 0, 0, 1, 1, 2$ & $x_2$ & $0, 0, 0, 0, 1, 2$\\
$0, 0, 0, 0, 1, 2, 2$ & $x_1 + x_2 + 2x_3 + 3x_4 + 2x_5 + x_6 + x_7$ & $0, 0, 0, 0, 1, 2$\\
$0, 0, 0, 0, 1, 2, 3$ & $x_1 + 2x_2 + 2x_3 + 3x_4 + 3x_5 + 2x_6 + x_7$ & $0, 0, 0, 1, 2, 3$\\
$0, 0, 0, 1, 1, 2, 3$ & $x_1 + x_2 + 2x_3 + 3x_4 + 3x_5 + 2x_6 + x_7$ & $0, 0, 0, 1, 2, 3$\\
$0, 0, 0, 1, 2, 2, 3$ & $x_5 + x_6$ & $0, 0, 0, 1, 2, 3$\\
$0, 0, 0, 1, 2, 3, 3$ & $x_1 + 2x_2 + 2x_3 + 4x_4 + 3x_5 + 2x_6 + x_7$ & $0, 0, 0, 1, 3, 3$\\
$0, 0, 0, 1, 3, 3, 3$ & $x_4 + x_5 + x_6$ & $0, 0, 0, 1, 3, 3$\\
$0, 0, 0, 1, 3, 3, 4$ & $x_2 + x_4 + x_5 + x_6$ & $0, 0, 0, 1, 3, 3$\\
$0, 0, 0, 1, 3, 3, 5$ & $x_3 + x_4 + x_5$ & $0, 0, 1, 3, 3, 5$\\
$0, 0, 1, 1, 3, 3, 5$ & $x_2 + x_3 + 2x_4 + x_5$ & $0, 0, 1, 3, 3, 5$\\
$0, 0, 1, 2, 3, 3, 5$ & $x_2 + x_3 + 2x_4 + 2x_5 + x_6$ & $0, 0, 1, 3, 3, 5$\\
$0, 0, 1, 3, 3, 3, 5$ & $x_2 + x_3 + x_4 + x_5$ & $0, 0, 1, 3, 3, 5$\\
$0, 0, 1, 3, 3, 4, 5$ & $x_1 + x_2 + x_3 + 2x_4 + 2x_5 + 2x_6 + x_7$ & $0, 0, 1, 3, 4, 5$\\
$0, 0, 1, 3, 4, 4, 5$ & $x_2 + x_3 + 2x_4 + x_5 + x_6$ & $0, 1, 3, 4, 4, 5$\\
$0, 1, 1, 3, 4, 4, 5$ & $x_1 + x_2 + x_3 + 2x_4 + 2x_5 + x_6 + x_7$ & $0, 1, 3, 4, 4, 5$\\
$0, 1, 2, 3, 4, 4, 5$ & $x_3 + x_4 + x_5 + x_6$ & $0, 1, 3, 4, 4, 5$\\
$0, 1, 3, 3, 4, 4, 5$ & $x_2 + x_3 + x_4 + x_5 + x_6$ & $0, 1, 3, 4, 4, 5$\\
$0, 1, 3, 4, 4, 4, 5$ & $x_1 + 2x_2 + 3x_3 + 4x_4 + 3x_5 + 2x_6 + x_7$ & $0, 1, 4, 4, 4, 5$\\
$0, 1, 4, 4, 4, 4, 5$ & $x_1 + x_2 + x_3 + 2x_4 + 2x_5 + x_6$ & $1, 4, 4, 4, 4, 5$\\
$1, 1, 4, 4, 4, 4, 5$ & $x_2 + x_3 + 2x_4 + x_5 + x_6 + x_7$ & $1, 4, 4, 4, 4, 5$\\
$1, 2, 4, 4, 4, 4, 5$ & $x_1 + x_2 + x_3 + x_4 + x_5$ & $1, 4, 4, 4, 4, 5$\\
$1, 3, 4, 4, 4, 4, 5$ & $x_1 + x_3 + x_4 + x_5$ & $1, 4, 4, 4, 4, 5$\\
$1, 4, 4, 4, 4, 4, 5$ & $x_3$ & $1, 4, 4, 4, 4, 5$\\
$1, 4, 4, 4, 4, 5, 5$ & $x_1 + x_2 + x_3 + 2x_4 + x_5 + x_6 + x_7$ & $1, 4, 4, 4, 5, 5$\\
$1, 4, 4, 4, 5, 5, 5$ & $x_6$ & $1, 4, 4, 5, 5, 5$\\
$1, 4, 4, 5, 5, 5, 5$ & $x_1 + x_2 + 2x_3 + 2x_4 + 2x_5 + x_6$ & $1, 4, 5, 5, 5, 5$\\
$1, 4, 5, 5, 5, 5, 5$ & $x_3 + x_4$ & $1, 4, 5, 5, 5, 5$\\
$1, 4, 5, 5, 5, 5, 6$ & $x_1$ & $1, 5, 5, 5, 5, 6$\\
$1, 5, 5, 5, 5, 5, 6$ \\
\hline                                
\end{tabular}\\
\bigskip
\caption{Induction table for the subarrangement $\CA$ of $\CA(E_7)$.} 
\label{indtable:A} 
\end{table}

Here $\CA$ is realized as a subarrangement
of the Weyl arrangement $\CA(E_7)$ of the Weyl group of type $E_7$. 
The $x_1, \ldots, x_7$ represent the simple roots 
according to the labeling in 
\cite[Planche VI]{bourbaki:groupes}.
One checks that the resulting arrangement is inductively free with exponents $\exp\CA = \{1, 5, 5, 5, 5, 5, 6\}$. 
Of course, this entails checking inductive freeness of all rank $6$ restrictions in Table \ref{indtable:A} and again 
their restrictions, etc. 
In particular, if we remove the last hyperplane in the inductive chain, $\ker (x_1)$, then $\CA'$ is still inductively free with  
$\exp\CA' = \{1, 4, 5, 5, 5, 5,  6\}$.

However, if instead we remove the penultimate hyperplane from $\CA$ in the chain below, $\ker(x_3+x_4)$, 
then the resulting arrangement, say $\CB$, while still additionally free with exponents $\exp\CB = \{1, 5, 5,  5, 5, 5, 5\}$,
is no longer inductively free, as no restriction $\CB''$ 
with matching exponents  $\{1, 5, 5,  5, 5, 5\}$
is inductively free, see Table \ref{indtable:B}. 
Indeed, up to isomorphism there are only two restrictions $\CB''$ with fitting exponents 
$\exp\CB'' = \{1, 5, 5,  5, 5, 5\}$. Both of them are again additionally free but neither of them is inductively free.
If we consider further all possible restrictions of these two types of restrictions $\CB''$ with matching set of exponents 
$\{1, 5, 5,  5, 5\}$, then there is only one such further restriction  
of rank $5$ up to isomorphism. This is the arrangement $\CD$ which we are going to  
examine in \S \ref{sec:C}. There we show that $\CD$ is not inductively free which in turn shows that 
$\CB$ is not inductively free either.
This in particular then implies that  $\CB \in \CAF\setminus \CIF$. 

\begin{table}[h]
\begin{tabular}{lll}  \hline
  $\exp \CA'$ & $\alpha_H$ & $\exp \CA''$\\
\hline\hline
\dots & \dots & \dots \\
$1, 4, 4, 4, 5, 5, 5$ & $x_6$ & $1, 4, 4, 5, 5, 5$\\
$1, 4, 4, 5, 5, 5, 5$ & $x_1 + x_2 + 2x_3 + 2x_4 + 2x_5 + x_6$ & $1, 4, 5, 5, 5, 5$\\
$1, 4, 5, 5, 5, 5, 5$ & $x_1$ & $1, 5, 5, 5, 5, 5$\\
$1, 5, 5, 5, 5, 5, 5$  \\
\hline                                
\end{tabular}\\
\bigskip
\caption{Chain of hyperplanes for $\CB$ in $\CAF$.} 
\label{indtable:B} 
\end{table}

If we further remove $\ker (x_1)$ from $\CB$, the resulting arrangement $\CB'$ is of course inductively free again,
by Table \ref{indtable:A}, as it coincides with $\CA\setminus\{\ker (x_1), \ker(x_3+x_4)\}$. So
the non-inductively free arrangement   
 $\CB$ is tightly sandwiched between the  inductively free arrangements $\CB'$ and $\CA$.

\subsection{}
\label{sec:C}
Next, we consider the 
restriction $\CC :=\CA^Z$ of $\CA$, where $Z$ is the intersection of the hyperplanes
$H_1 : = \ker(x_1)$ and $H': = \ker(x_1 + x_2 + 2x_3 + 2x_4 + 2x_5 + x_6)$.
Then $\CC$ is a subarrangement of the restriction $\CA(E_7)^Z$ of $\CA(E_7)$ which is of type $(E_7, A_1^2)$.

One checks that $\CC$ is again 
inductively free with exponents $\exp\CC = \{1, 5, 5, 5, 6\}$. 
An induction table for $\CC$ is given in Table \ref{indtable:C}. 
In particular, if we remove the last hyperplane in the inductive chain, $\ker (x_4)$, then $\CC'$ is still inductively free with  $\exp\CC' = \{1, 4, 5, 5, 6\}$.

\begin{table}[h]
\begin{tabular}{lll}  \hline
  $\exp \CA'$ & $\alpha_H$ & $\exp \CA''$\\
\hline\hline
$0, 0, 0, 0, 0$ {\hglue 5pt}	   &    $x_2$ 			             & $0,0,0,0$ \\
$0, 0, 0, 0, 1$ 	   &    $x_1+x_3-x_5$				             & $0,0,0,1$ \\
$0, 0, 0, 1, 1$ 	   &    $2x_1+x_2+x_3$			            & $0, 0,1,1$ \\    
$0, 0, 1, 1, 1$ 	   &    $2x_1+x_2+2x_3+x_4-x_5$		   & $0,1,1,1$ \\
$0, 1, 1, 1, 1$ 	   &    $x_5$					             & $1, 1,1,1$ \\
$1, 1, 1, 1, 1$ 	   &    $x_1+x_3$					             & $1,1,1, 1$ \\
$1, 1, 1, 1, 2$ 	   &    $x_2+x_5$					             & $1, 1,1,2$ \\    
$1, 1, 1, 2, 2$ 	   &    $2x_1+x_2+2x_3+x_4$				             & $1, 1,2,2$ \\    
$1, 1, 2, 2, 2$ 	   &    $2x_1+x_3-x_5$					             & $1, 2,2,2$ \\
$1, 2, 2, 2, 2$ 	   &    $2x_1+2x_2+2x_3+x_4$					             & $1, 2,2,2$ \\        
$1, 2, 2, 2, 3$ 	   &    $x_2+x_3+x_4$					             & $1, 2,2,3$ \\
$1, 2, 2, 3, 3$ 	   &    $x_1+x_2+x_3+x_4$                        &       $1, 2,3,3$  \\
$1, 2, 3, 3, 3$ 	   &    $x_3+x_4$                         &        $1, 3,3,3$ \\
$1, 3, 3, 3, 3$ 	   &    $x_1+x_2+x_3$                         &  $1, 3,3,3$       \\
$1, 3, 3, 3, 4$ 	   &    $x_1$                         &        $1, 3,3,4$ \\
$1, 3, 3, 4, 4$ 	   &    $x_1+x_3+x_4$                         &  $1, 3,4,4$       \\
$1, 3, 4, 4, 4$ 	   &    $2x_1+x_2+x_3-x_5$                         & $1, 4,4,4$        \\
$1, 4, 4, 4, 4$ 	   &    $x_2+x_3+x_4+x_5$                         & $1, 4,4,4$        \\
$1, 4, 4, 4, 5$ 	   &    $x_1-x_5$                         & $1, 4,4,5$        \\
$1, 4, 4, 5, 5$ 	   &    $x_1-x_4-x_5$                         & $1, 4,5,5$        \\
$1, 4, 5, 5, 5$ 	   &    $x_1+x_2$                         & $1, 4,5,5$        \\
$1, 4, 5, 5, 6$ 	   &    $x_4$                         & $1, 5,5,6$        \\
$1, 5, 5, 5, 6$ \\
\hline                                
\end{tabular}\\
\bigskip
\caption{Induction table for the rank $5$ arrangement $\CC$.} 
\label{indtable:C} 
\end{table}

However, if instead we remove the penultimate hyperplane from $\CC$ in the chain in Table \ref{indtable:C}, 
$\ker(x_1+x_2)$, 
then the resulting arrangement, say $\CD$, while still additionally free with exponents $\exp\CD = \{1, 5, 5, 5, 5\}$,
is no longer inductively free, as no restriction $\CD''$ 
with matching exponents $\{1, 5, 5, 5\}$
is inductively free, see Table \ref{indtable:D}.
Up to isomorphism there is only one restriction $\CD'' \cong \CD^{\ker x_4}$ with $\exp\CD'' = \{1, 5, 5, 5\}$. While this 
restriction is necessarily free, 
it is no longer additionally free (and so clearly not inductively free). 
For any choice of hyperplane in $\CD''$, the resulting deletion even if free
does not have matching exponents $\{1, 4, 5, 5\}$.
In particular, we have $\CD \in \CAF\setminus \CIF$. 
This in particular proves Theorem \ref{thm:abe-conj4.4}(i). 
In addition this also shows that  
$\CAF$ is not closed under taking restrictions.

If we further remove $\ker (x_4)$ from $\CD$, the resulting arrangement $\CD'$ is of course inductively free again,
by Table \ref{indtable:C}, as it coincides with $\CC\setminus\{\ker (x_4), \ker(x_1+x_2)\}$. So
the non-inductively free arrangement   
 $\CD$ is sandwiched between the  inductively free arrangements $\CD'$ and $\CC$.

\begin{table}[h]
\begin{tabular}{lll}  \hline
  $\exp \CA'$ & $\alpha_H$ & $\exp \CA''$\\
\hline\hline
\dots & \dots & \dots \\
$1, 4, 4, 5, 5$ 	   &    $x_1-x_4-x_5$                         & $1, 4,5,5$        \\
$1, 4, 5, 5, 5$ 	   &    $x_4$                         & $1, 5,5,5$        \\
$1, 5, 5, 5, 5$ \\
\hline                                
\end{tabular}\\
\bigskip
\caption{Chain of hyperplanes for $\CD$ in $\CAF$.} 
\label{indtable:D} 
\end{table}

Moreover, one checks that $\CD$ is not divisionally free. For, there is no restriction of 
$\CD''$ with exponents $\{1, 5, 5\}$ and so the characteristic polynomial of any such restriction 
does not divide the characteristic polynomial of  $\CD''$.
In particular, this proves Theorem \ref{thm:abe-conj4.4}(ii).

As a subarrangement of $\CC$, also $\CD$ is a subarrangement of the restriction of 
$\CA(E_7)$ of type $(E_7,A_1^2)$.
Explicitly, $\CD$ is obtained from the 
arrangement  $\CB$ as the restriction $\CD = \CB^X$, where $X := \ker(x_1) \cap \ker(x_6)$.
The properties of $\CD$ we have established imply that $\CB$ above 
also satisfies the conditions in  Theorem \ref{thm:abe-conj4.4}(i) and (ii).
These are the only examples known to us which satisfy these properties.

We further observe that if we label the last three hyperplanes in the chain for $\CB$ in Table \ref{indtable:B} by 
$H_1 : = \ker(x_1)$, $H': = \ker(x_1 + x_2 + 2x_3 + 2x_4 + 2x_5 + x_6)$,  and $H_6 : = \ker(x_6)$,
then these hyperplanes $H$ are precisely the ones so that  the  restriction $\CB^H$ has got the 
required exponents $\{1, 5, 5,  5, 5, 5\}$ to satisfy the deletion part of Theorem \ref{thm:add-del} for $\CB$. Further, for
 $Y := H_1 \cap H' \cap H_6$ we have
$\CB_Y = \{H_1, H',  H_6\}$ and $\CB^Y = \CD''$.
This implies that also $\CB$ fails to be divisionally free. For, a divisional chain as in Definition \ref{def:divfree} 
necessarily has to pass through a restriction isomorphic to $\CD''$.

\subsection{}
Thanks to \cite[Thm.~1.6]{abe:divfree}, the reflection arrangement $\CA(G_{31})$ of the complex reflection group 
$G_{31}$ is divisionally free but it is not additionally free, see the proof of \cite[Lem.\ 3.5]{hogeroehrle:inductivelyfree}.  
Thus $\CA(G_{31})$  belongs to $\CDF$ but not to $\CAF$.
Since $\CD$ above belongs to $\CAF$ but not to $\CDF$, 
it follows from 
Proposition \ref{prop:product-sffree} and \cite[Prop.~2.9]{roehrle:divfree}
that $$\CE:= \CD \times \CA(G_{31})$$ neither belongs to $\CDF$, nor to $\CAF$, but 
at the same time $\CE$ is still stair-free, i.e.
\[\CE \in \CSF\setminus (\CDF \cup \CAF)\] and so Theorem \ref{thm:abe-conj4.4}(iii)
follows. It would be interesting to know of an irreducible example in $\CSF\setminus (\CDF \cup \CAF)$. 

\subsection{}
\label{rem:computations}
The facts that $\CA$ and $\CC$ above  are  
inductively free and that both
$\CB$ and $\CD$ are still   
additionally free  
were checked by computational means.
Likewise the fact that $\CD$ is 
not divisionally free was checked with the aid of a computer.

\section{Non-free triples of arrangements}
\label{sec:proof2}

In this section we discuss counterexamples to 
another conjecture of Abe, \cite[Conj.~3.5(2)]{abe:sf}. 
Specifically, 
here we provide an example of a triple of arrangements $(\CA, \CA', \CA'')$ with the property that 
none of them is free but each of their characteristic polynomials factors over $\BBZ$ and 
$\chi(\CA'', t)$ divides both $\chi(\CA', t)$ and $\chi( \CA, t)$ so that the latter two 
polynomials share all but one root.

\begin{example}
\label{ex:divnotfree}
Let $w, x, y, z$ be indeterminates over $\BBQ$ and let 
$\CA$ be the arrangement in $\BBQ^4$ with 11 hyperplanes given by
\[
Q(\CA) = wxyz(x+y)(x+z)(x-z)(y-z)(y+z)(x+y-z)(w+x-y).
\]
It is easy to check that for $H = \ker(x+y-z)$, we have
\begin{align*}
\chi( \CA, t) & =  (t-1)(t-3)^2(t-4),\\ 
\chi( \CA', t) & =  (t-1)(t-3)^3, \text{ and } \\
\chi( \CA'', t) & =  (t-1)(t-3)^2.
\end{align*}
Although the factorization over $\BBZ$ of each of  
these polynomials is consistent with
Terao's Factorization Theorem \ref{thm:factorization}, none
of the arrangements in the triple $(\CA, \CA', \CA'')$ is free.
\end{example}

In the following we provide a general construction to generate counterexamples to 
\cite[Conj.~3.5(2)]{abe:sf} with an arbitrary number of hyperplanes in any dimension at least $3$.

\begin{example}
\label{ex:divnotfree2}
Let $\CB$ be a fixed non-free arrangement in dimension $\ell \ge 3$ over $\BBQ$ with the property that 
its characteristic polynomial factors over $\BBZ$, e.g.~take the arrangement $\CA''$ from Example \ref{ex:divnotfree}.
Without loss we may assume that $\ker x \in \CB$, where $x$ is a coordinate of $\BBQ^{\ell}$.
Now view $\CB$ as an arrangement in $\BBQ^{\ell+1}$ and let  $z$ be the new coordinate.
Fix an integer $m\ge 0$ and define $\CA$  in $\BBQ^{\ell+1}$  by adding the hyperplanes 
$\ker(z), \ker(x-z), \ker(2x-z), \ldots, \ker(mx-z)$ to $\CB$. 
Consider the triple $(\CA, \CA', \CA'')$ with respect to 
$\ker(mx-z)$. Then we have $\CA'' \cong \CB$. By 
induction on $m$ and 
the fact that $\chi( \CA, t) = \chi( \CA', t) - \chi( \CA'', t)$ (\cite[Cor.~2.57]{orlikterao:arrangements}), we obtain 
\begin{align*}
\chi( \CA, t) & =  \chi( \CB, t)(t-m-1),\\ 
\chi( \CA', t) & =  \chi( \CB, t)(t-m), \text{ and } \\
\chi( \CA'', t) & =  \chi( \CB, t).
\end{align*}
Still none
of the arrangements in the triple $(\CA, \CA', \CA'')$ is free. For, 
the localization of $\CA$ at the center of $\CB$ in $L(\CA)$ is isomorphic to $\CB$ and thus is not free,
thus neither is $\CA$, by \cite[Thm.~4.37]{orlikterao:arrangements}; 
likewise for $\CA'$. 
\end{example}


\bigskip 

{\bf Acknowledgments}:  The research of this work was supported by 
DFG-grant RO 1072/16-1.


\bigskip

\bibliographystyle{amsalpha}

\newcommand{\etalchar}[1]{$^{#1}$}
\providecommand{\bysame}{\leavevmode\hbox to3em{\hrulefill}\thinspace}
\providecommand{\MR}{\relax\ifhmode\unskip\space\fi MR }
\providecommand{\MRhref}[2]{%
  \href{http://www.ams.org/mathscinet-getitem?mr=#1}{#2} }
\providecommand{\href}[2]{#2}


\end{document}